\theoremstyle{plain}
\newtheorem{theorem}{Theorem}[section]
\newtheorem{corollary}[theorem]{Corollary}
\newtheorem{lemma}[theorem]{Lemma}
\newtheorem{prop}[theorem]{Proposition}
\theoremstyle{definition}
\theoremstyle{remark}
\newcommand{\bbC}{\mathbb{C}}
\newcommand{\bbD}{\mathbb{D}}
\newcommand{\bbN}{\mathbb{N}}
\newcommand{\mco}{\mathcal{O}}
\newcommand{\mcp}{\mathcal{P}}
\DeclareMathOperator*{\Real}{Re}
\title[]{Hyponormal Toeplitz Operators with Non-harmonic Algebraic Symbol}
\author[]{Brian Simanek}
\date{}
\begin{document}
\maketitle

\begin{abstract}
Given a bounded function $\varphi$ on the unit disk in the complex plane, we consider the operator $T_{\varphi}$, defined on the Bergman space of the disk and given by $T_{\varphi}(f)=P(\varphi f)$, where $P$ denotes the projection to the Bergman space in $L^2(\bbD,dA)$.  We provide new necessary conditions on $\varphi$ for $T_{\varphi}$ to be hyponormal, extending recent results of Fleeman and Liaw.  One of our main results provides a necessary condition on the complex constant $C$ for the operator $T_{z^n+C|z|^s}$ to be hyponormal.  This condition is also sufficient if $s\geq2n$.
\end{abstract}

\vspace{4mm}

\footnotesize\noindent\textbf{Keywords:} Hyponormal operator, Bergman space, Harmonic polynomial

\vspace{2mm}

\noindent\textbf{Mathematics Subject Classification:} Primary 47B20; Secondary 47B35

\vspace{2mm}

\normalsize

\section{Introduction}\label{intro}

Let $dA$ denote normalized area measure on the unit disk and let $A^2(\bbD)$ denote the Bergman space of the unit disk, that is
\[
A^2(\bbD)=\left\{f:\int_{\bbD}|f|^2dA<\infty,\, f\mathrm{\, is\, analytic\, in\, }\bbD\right\}
\]
An alternative description of $A^2(\bbD)$ is
\[
A^2(\bbD)=\left\{f(z)=\sum_{n=0}^{\infty}a_nz^n:\sum_{n=0}^{\infty}\frac{|a_n|^2}{n+1}<\infty\right\}
\]

A bounded operator $T$ is said to be \textit{hyponormal} if $[T^*,T]\geq0$, where $T^*$ denotes the adjoint of $T$.  An equivalent definition of hyponormality is $\|Tu\|\geq\|T^*u\|$ for all vectors $u$.  Such operators are of interest because of Putnam's inequality (see \cite[Theorem 1]{Putnam}), which says that hyponormal operators satisfy
\[
\|[T^*,T]\|\leq\frac{|\sigma(T)|}{\pi}
\]
where $\sigma(T)$ is the spectrum of $T$ and $|\cdot|$ dentoes the two-dimensional area.

The operators we are interested in are the Toeplitz operators with symbol $\varphi\in L^{\infty}(\bbD)$.  More precisely, if $\varphi\in L^{\infty}(\bbD)$, then we define the operator $T_{\varphi}:A^2(\bbD)\rightarrow A^2(\bbD)$ by
\[
T_{\varphi}(f)=P(\varphi f),
\]
where $P$ denotes the projection to the Bergman space in $L^2(\bbD,dA)$.  We are interested in understanding what symbols $\varphi$ yield Toeplitz operators $T_{\varphi}$ that are hyponormal.  An analogous question can be asked in the setting of the Hardy space of the unit disk and it was answered by Cowen in \cite{Cowen}.

There are several obvious examples of hyponormal Toeplitz operators acting on the Bergman space.  For instance, $T_{|z|^2}$ is hyponormal because (recalling the fact that $T_{\varphi}^*=T_{\overline{\varphi}}$) it is self-adjoint.  The operator $T_{z}$ is also hyponormal because if $f\in A^2(\bbD)$, then
\[
\|T_zf\|^2=\int_{\bbD}|zf|^2dA=\int_{\bbD}|\bar{z}f|^2dA\geq\int_{\bbD}|P(\bar{z}f)|^2dA=\|T^*_{z}f\|^2,
\]
The same reasoning shows that $T_g$ is hyponormal for any $g\in H^{\infty}(\bbD)$.

While a complete characterization of hyponormal Toeplitz operators acting on the Bergman space has remained elusive, there has been a substantial amount of work on understanding the case when $\varphi$ is a polynomial in $z$ and $\bar{z}$ (see \cite{CC,FL,Hwang,Hwang2,LuShi}).  The main focus of this work will be to understand how one can perturb a hyponormal operator in a way that preserves hyponormality.  For example, one could ask the following question:
\begin{itemize}
\item[(Q-I)]  If $m,n\in\bbN$, for what values of $a\in\bbC$ is $T_{z^m+a\bar{z}^n}$ hyponormal?
\end{itemize}

This question was answered completely by Sadraoui in \cite[Proposition 1.4.4]{Sadraoui}.  Since $T_g$ is hyponormal whenever $g\in H^{\infty}(\bbD)$, it is no surprise that Sadraoui's result tells us that $T_{z^m+a\bar{z}^n}$ is hyponormal if and only if $|a|$ is sufficiently small (where ``sufficiently small" depends on $n$ and $m$).  What is perhaps more surprising is the answer to the following question:

\begin{itemize}
\item[(Q-II)]  For what values of $a\in\bbC$ is $T_{(z-1)^2+a\bar{z}}$ hyponormal?
\end{itemize}

The surprising answer is that $T_{(z-1)^2+a\bar{z}}$ is hyponormal if and only if $a=0$.  This follows from the following result, which is \cite[Theorem 1.4.3]{Sadraoui}.

\begin{theorem}\label{sad143}(\cite[Theorem 1.4.3]{Sadraoui})
Suppose $f,g\in H^{\infty}(\bbD)$ and $f'\in H^2(\bbD)$.  If $T_{f+\bar{g}}$ is hyponormal, then $g'\in H^2(\bbD)$ and $|g'|\leq|f'|$ almost everywhere on $\partial\bbD$.
\end{theorem}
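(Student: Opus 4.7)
The plan is to reduce hyponormality to a Hankel operator inequality via algebraic manipulation of the self-commutator, test this inequality against the Bergman reproducing kernel to obtain a Berezin-transform comparison, and then extract the pointwise boundary inequality by sending $w \to \partial\bbD$. Setting $\varphi = f + \bar g$, the Bergman Toeplitz multiplication rules $T_h T_k = T_{hk}$ (valid whenever $h\in H^{\infty}$ or $k\in H^{\infty}$, including the conjugate versions) cause most of the mixed products in $T_\varphi^* T_\varphi$ and $T_\varphi T_\varphi^*$ to cancel in pairs, leaving
\[
[T_\varphi^*, T_\varphi] \;=\; (T_{|f|^2} - T_f T_{\bar f}) - (T_{|g|^2} - T_g T_{\bar g}) \;=\; H_{\bar f}^* H_{\bar f} - H_{\bar g}^* H_{\bar g},
\]
where $H_{\bar\psi} := (I-P)M_{\bar\psi}|_{A^2(\bbD)}$ is the big Hankel operator. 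Hyponormality thus translates to $\|H_{\bar f} u\|^2 \geq \|H_{\bar g} u\|^2$ for every $u \in A^2(\bbD)$.

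Next, I test against the reproducing kernel $K_w(z) = (1-\bar w z)^{-2}$. The identity $P(\bar\psi K_w) = \overline{\psi(w)}\,K_w$ (for $\psi\in H^{\infty}$) gives $H_{\bar\psi} K_w = (\bar\psi - \overline{\psi(w)})K_w$ and hence
\[
\|H_{\bar\psi} K_w\|^2 \;=\; \int_\bbD |\psi(z) - \psi(w)|^2\, |K_w(z)|^2\, dA(z).
\]
After the change of variable $z = \varphi_w(v)$ with $\varphi_w(z) = (w-z)/(1-\bar w z)$ (the Möbius involution of $\bbD$ interchanging $0$ and $w$), and normalizing by $\|K_w\|^2 = (1-|w|^2)^{-2}$, the Hankel inequality becomes the Berezin-transform comparison
\[
\widetilde{|f|^2}(w) - |f(w)|^2 \;\geq\; \widetilde{|g|^2}(w) - |g(w)|^2 \qquad (w \in \bbD),
\]
equivalently $\|f\circ\varphi_w - f(w)\|_{A^2}^2 \geq \|g\circ\varphi_w - g(w)\|_{A^2}^2$.

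The hard step is upgrading this to the pointwise boundary bound $|g'(\zeta)| \leq |f'(\zeta)|$ a.e.\ on $\partial\bbD$. Expanding $f\circ\varphi_w(v) = f(w) + \sum_{k\geq 1} c_k^f(w)\, v^k$ with leading coefficient $c_1^f(w) = -(1-|w|^2) f'(w)$, the $k=1$ term of the expansion $\sum_{k\ge 1}|c_k^f(w)|^2/(k+1)$ equals $\tfrac{1}{2}(1-|w|^2)^2 |f'(w)|^2$, but the higher coefficients for $k\ge 2$ also contribute terms of the same order in $(1-|w|^2)$, so the Berezin inequality by itself is too weak to isolate $|f'|^2$ on the boundary. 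To overcome this, one tests the Hankel inequality against a larger family of probes---kernel derivatives $\partial^j_{\bar w} K_w$ or monomials $z^N$---that orthogonally project out the higher Taylor coefficients, or equivalently integrates the Berezin inequality against a weight producing the Littlewood--Paley representation $\|h'\|_{H^2}^2 \asymp \int_\bbD |h'|^2 \log(1/|z|)\,dA$. Either route, combined with the hypothesis $f'\in H^2$, should yield $|f'(\zeta)|^2 \geq |g'(\zeta)|^2$ a.e.\ on $\partial\bbD$; $g'\in H^2(\bbD)$ then follows automatically since $\int_0^{2\pi}|g'|^2\, d\theta \leq \int_0^{2\pi}|f'|^2\, d\theta < \infty$.
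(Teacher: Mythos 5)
This theorem is imported from Sadraoui's thesis; the paper itself contains no proof of it, so your argument has to be judged on its own terms. Your opening reduction is correct and is indeed the standard one: the analytic--analytic and coanalytic--coanalytic cross terms cancel, leaving $[T_{f+\bar g}^{*},T_{f+\bar g}]=H_{\bar f}^{*}H_{\bar f}-H_{\bar g}^{*}H_{\bar g}$, and testing on normalized kernels gives the invariant mean--value comparison $\int_{\bbD}|f\circ\varphi_w-f(w)|^{2}\,dA\geq\int_{\bbD}|g\circ\varphi_w-g(w)|^{2}\,dA$. The computation $c_1^{f}(w)=-(1-|w|^{2})f'(w)$ is also right.

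The genuine gap is precisely where you flag it, and neither of the remedies you name closes it. Testing on monomials $z^{N}$ gives $\|H_{\bar f}z^{N}\|^{2}=\sum_{k}k^{2}|c_k|^{2}/\bigl((N+k+1)(N+1)^{2}\bigr)$ for $f=\sum c_kz^{k}$, so $N^{3}\|H_{\bar f}z^{N}\|^{2}\to\|f'\|_{H^{2}}^{2}$; this, like the Littlewood--Paley identity, yields only the integrated inequality $\|g'\|_{H^{2}}\leq\|f'\|_{H^{2}}$ (hence $g'\in H^{2}$), not the almost-everywhere pointwise bound $|g'|\leq|f'|$ on $\partial\bbD$, which is strictly stronger. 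The actual proof (Sadraoui; generalized in the paper's reference \cite{AC}) hinges on a mean-value lemma that your sketch omits: when $f'\in H^{2}$, the tail $\sum_{k\geq2}|c_k^{f}(w)|^{2}/(k+1)$ is $o\bigl((1-r^{2})^{2}\bigr)$ in $L^{1}(d\theta)$-mean on the circles $|w|=r$ as $r\to1$, which is what permits dividing the Berezin inequality by $(1-|w|^{2})^{2}$, integrating over boundary arcs, and localizing to get $|g'|\leq|f'|$ a.e. That lemma is the substance of the theorem and is replaced in your write-up by ``should yield.'' Note also that your final sentence is circular as written: you deduce $g'\in H^{2}$ from the pointwise inequality you have not yet established (though, as above, membership in $H^{2}$ can be obtained independently from the monomial test).
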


Theorem \ref{sad143} tells us that when trying to understand the effect of a perturbation of the symbol $f$, one must measure the perturbation not just of $f$, but of $f'$ as well.  This is a very important insight and this theorem has many powerful consequences.  For instance, if one applies this result with $f(z)=\alpha z^m+\beta z^n$ and $g(z)=\gamma z^p+\delta z^q$, then one obtains a new short proof of \cite[Theorem 3.10]{CC}.  If one applies this result with $f(z)=\gamma z+7z^2+2z^3$ and $g(z)=8z^3+z^2+\beta z$ with $|\gamma|=|\beta|$, then one obtains a new proof of \cite[Example 2.9]{Hwang2}.  While Sadraoui's result (Theorem \ref{sad143}) is very powerful, a more general result was proven in \cite{AC}.

Here are some additional questions that we will address in the sections that follow:

\begin{itemize}
\item[(Q-III)]  If $n\in\bbN$ and $s\in(0,\infty)$, for what values of $a\in\bbC$ is $T_{z^n+a|z|^s}$ hyponormal?
\item[(Q-IV)]  If $m<n\in\bbN$ and $s,t\in[0,\infty)$, for what values of $a\in\bbC$ is $T_{z^n|z|^s+az^m|z|^t}$ hyponormal?
\item[(Q-V)]  If $n\in\bbN$ and $s_0,s_1\in(0,\infty)$, for what values of $a\in\bbC$ is $T_{z^n\left(|z|^{s_0}+a_1|z|^{s_1}\right)}$ hyponormal?
\end{itemize}

The remainder of the paper is devoted to proving results that will help us answer these three questions.  We will provide a partial answer to question (Q-III) in Section \ref{Ex1}, which is a complete answer in the case $s\geq2n$.  In Section \ref{additive} we will present results related to question (Q-IV) and in Section \ref{multiplicative} we will present results related to question (Q-V).

A helpful formula that we will repeatedly use throughout this paper comes from \cite{FL} and is given by
\begin{align}\label{expand}
 &\left\langle \left[(T+S)^{*},T+S\right]u,u\right\rangle \nonumber \\
 &\quad =\left\langle Tu,Tu\right\rangle -\left\langle T^{*}u,T^{*}u\right\rangle +2\mathrm{Re}\left[\left\langle Tu,Su\right\rangle -\left\langle T^{*}u,S^{*}u\right\rangle \right]+\left\langle Su,Su\right\rangle -\left\langle S^{*}u,S^{*}u\right\rangle.
\end{align}
This formula will enable us to isolate the perturbations and understand their effect on hyponormality.

\section{The Operator $T_{z^n+C|z|^{s}}$}\label{Ex1}

In this section, we will answer the question (Q-III).  In \cite{FL} Fleeman and Liaw consider non-harmonic polynomials and present the somewhat surprising example that $T_{z+C|z|^2}$ is not hyponormal if $|C|>2\sqrt{2}$.  They wonder for what values of $C$ the operator $T_{z+C|z|^2}$ is hyponormal.  We will consider the more general class of $T_{\varphi}$ when $\varphi(z)=z^n+C|z|^s$, where $n\in\bbN$, $s\in(0,\infty)$, and $C\in\bbC$.  As a consequence of our results, we will see that $T_{z+C|z|^2}$ is hyponormal if and only if $|C|\leq\frac{1}{2}$ (see Theorem \ref{better1}).  Our most general result is the following theorem.

\begin{theorem}\label{better1}
Suppose $C\in\bbC$, $s\in(0,\infty)$, and $n\in\bbN$.  If $T_{z^n+C|z|^{s}}$ is hyponormal, then $|C|\leq\frac{n}{s}$.  If $s\geq2n$, then the converse is also true.
\end{theorem}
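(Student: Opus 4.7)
The plan is to apply the expansion formula (\ref{expand}) with $T = T_{z^n}$ and $S = T_{C|z|^s}$. Since $|z|^s$ is real-valued, $T_{|z|^s}$ is self-adjoint, so $S^* = \bar C T_{|z|^s}$ and $\|Su\| = |C|\|T_{|z|^s}u\| = \|S^*u\|$; hence the last two terms of (\ref{expand}) cancel. I then compute the action on the orthogonal basis $\{z^k\}$: $T(z^k)=z^{k+n}$, $T^*(z^k)=\tfrac{k-n+1}{k+1}z^{k-n}$ for $k\ge n$ (and $0$ otherwise), and $T_{|z|^s}(z^k)=\tfrac{k+1}{s/2+k+1}z^k$ by direct polar integration. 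Substituting $u=\sum a_k z^k$ and simplifying with the identity $(k+1)^2-(k-n+1)(k+n+1)=n^2$, the commutator expectation becomes a tridiagonal Hermitian form
\[
\langle [T_\varphi^*,T_\varphi]u,u\rangle = \sum_k \alpha_k |a_k|^2 + 2\Real\!\left[\bar C\sum_k \gamma_k\, a_k\overline{a_{k+n}}\right]
\]
with explicit positive coefficients $\alpha_k,\gamma_k$. Because this couples only indices differing by $n$, it decomposes into $n$ independent Jacobi-type forms, one for each residue class $r\bmod n$.

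For necessity, I work in the residue class $r=0$ and choose $a_{kn} = (-\bar C/|C|)^k$ for $k\in[K,K+M]$ and zero elsewhere, making every cross term maximally negative. Using the asymptotics $\alpha_{kn}\sim 1/(n k^3)$ and $\gamma_{kn}\sim s/(2n^2 k^3)$ as $k\to\infty$, the tail sums give
\[
\langle [T_\varphi^*,T_\varphi]u,u\rangle \;\sim\; \frac{n-s|C|}{2n^2 K^2}\qquad (K\to\infty,\ M/K\to\infty),
\]
which is negative once $|C|>n/s$; hence hyponormality forces $|C|\le n/s$.

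For the converse when $s\ge 2n$, substitute $b_j = \sqrt{\alpha_{r+jn}}\,a_{r+jn}$ within each residue class $r$, turning the form into $\langle (I-|C|M^{(r)})b,b\rangle$, where $M^{(r)}$ is the symmetric Jacobi matrix with zero diagonal and off-diagonal entries $m_j = \gamma_{r+jn}/\sqrt{\alpha_{r+jn}\alpha_{r+(j+1)n}}$. The elementary bound $\|M^{(r)}\|\le 2\sup_j m_j$ reduces sufficiency to the pointwise inequality $m_j\le s/(2n)$. After clearing radicals, this is the explicit polynomial inequality
\[
(r+jn+1)\sqrt{(r+(j+1)n+1)(r+(j+2)n+1)} \;\le\; \bigl(s/2+r+jn+1\bigr)\bigl(s/2+r+(j+1)n+1\bigr)
\]
for $j\ge 1$, together with a separate boundary case at $j=0$ (where $\alpha_0^{(r)}=1/(r+n+1)$ differs from the general formula). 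Setting $p=r+jn+1$ and $t=s/2$, the $j\ge 1$ inequality becomes $p\sqrt{(p+n)(p+2n)}\le(t+p)(t+p+n)$, and squaring reveals that the hypothesis $t\ge n$ (i.e.\ $s\ge 2n$) is exactly what is needed.

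The main obstacle is the sufficient direction: the polynomial inequality above must be verified uniformly in $j$ and across every residue class, with the threshold $s=2n$ emerging precisely as the tight case. The necessary direction is cleaner but requires care in controlling the subleading terms of the asymptotic expansion so that the sharp constant $n/s$ (rather than a weaker bound) is extracted from the test vectors.
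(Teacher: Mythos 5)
Your proposal is correct and follows essentially the same route as the paper: reduce hyponormality to a quadratic form in the Taylor coefficients via the projection formulas, extract the sharp necessary bound $|C|\le n/s$ from test vectors with long support and an asymptotic evaluation of the sums, and prove sufficiency for $s\ge 2n$ by a Cauchy--Schwarz-type bound on the off-diagonal term that reduces to pointwise inequalities tight exactly at $s=2n$. Your Schur-test/Jacobi-matrix packaging of the sufficiency step and your flat test vectors on a single residue class are only cosmetic variations on the paper's direct Cauchy--Schwarz splitting and its $u_k=k+1$ test vectors, and both the interior inequality $p\sqrt{(p+n)(p+2n)}\le(p+t)(p+t+n)$ and the boundary case do check out.
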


Before we turn to the proof of this result, let us interpret it as in \cite{FL}.  Multiplication of the symbol by a non-zero constant does not effect hyponormality, so we may instead consider the symbol $Dz^n+|z|^s$.  Theorem \ref{better1} tells us that $T_{Dz^n+|z|^s}$ is not hyponormal when $|D|$ is sufficiently \textit{small} (and non-zero).  This is very surprising since we would expect small perturbations of a self-adjoint symbol by an entire function to preserve hyponormality (see also \cite[Example 1]{FL}).

Now we can turn to the proof of Theorem \ref{better1}, which we begin with a lemma.

\begin{lemma}\label{zm}
If $k\in\bbN_0$ and $t\in(0,\infty)$, then
\[
P(z^k|z|^t)=\frac{2(k+1)}{2k+t+2}z^k
\]
\end{lemma}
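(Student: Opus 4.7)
The plan is to compute the projection by expanding in the orthogonal basis $\{z^m\}_{m\geq 0}$ of $A^2(\bbD)$. Recall that with respect to the inner product $\langle f,g\rangle=\int_{\bbD}f\bar{g}\,dA$ (normalized so that $dA$ has total mass $1$), these monomials are orthogonal and satisfy $\|z^m\|^2=\frac{1}{m+1}$. Since $P(z^k|z|^t)\in A^2(\bbD)$, it can be written as $\sum_{m\geq 0} c_m z^m$ where $c_m \|z^m\|^2 = \langle z^k|z|^t, z^m\rangle$, so the whole task reduces to computing this family of inner products.

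The key computation is done in polar coordinates $z=re^{i\theta}$. One writes
\[
\langle z^k|z|^t, z^m\rangle = \frac{1}{\pi}\int_0^1\int_0^{2\pi} r^{k+t+m+1}e^{i(k-m)\theta}\,d\theta\,dr,
\]
and the angular integral vanishes unless $m=k$, which immediately shows that $P(z^k|z|^t)$ is a scalar multiple of $z^k$. For $m=k$ the radial integral evaluates to $\int_0^1 r^{2k+t+1}dr = \frac{1}{2k+t+2}$, giving $\langle z^k|z|^t, z^k\rangle = \frac{2}{2k+t+2}$.

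Dividing by $\|z^k\|^2 = \frac{1}{k+1}$ yields the single nonzero Fourier coefficient
\[
c_k = \frac{2(k+1)}{2k+t+2},
\]
which produces the claimed identity. There is no real obstacle here; the only point to be careful about is the normalization of $dA$ (so that the factor $1/\pi$ cancels the $2\pi$ from the angular integral) and noting that the argument works for every real $t>0$, not just integer values, since the orthogonality $\int_0^{2\pi}e^{i(k-m)\theta}d\theta = 2\pi\delta_{km}$ does not depend on $t$ at all.
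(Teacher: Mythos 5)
Your proof is correct and is essentially the paper's argument carried out in full: the paper simply asserts that one can check by direct calculation that $\left\langle z^q,\tfrac{2(k+1)}{2k+t+2}z^k\right\rangle=\langle z^q,z^k|z|^t\rangle$ for all $q$, and your polar-coordinate computation of these inner products (with the angular integral killing all $q\neq k$) is exactly that verification.
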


\begin{proof}
One can verify by direct calculation that
\[
\left\langle z^q, \frac{2(k+1)}{2k+t+2}z^k\right\rangle=\left\langle z^q, z^k|z|^t\right\rangle
\]
for every $q\in\bbN_0$, so the desired claim follows.
\end{proof}

\begin{proof}[Proof of Theorem \ref{better1}]
We begin by recalling \cite[Chapter 2, Lemma 6]{Duren}, which states that
\[
P(z^m\bar{z}^n)=
\begin{cases}
\frac{m-n+1}{m+1}z^{m-n}\qquad & m\geq n\\
0 & m<n.
\end{cases}
\]
Using this formula and Lemma \ref{zm}, it follows that if $u=\sum_{k=0}^{\infty}u_kz^k$, then
\begin{align*}
\langle T_{z^n}u,T_{z^n}u\rangle&=\sum_{k=0}^{\infty}\frac{|u_k|^2}{k+n+1}\\
\langle T_{\bar{z}^n}u,T_{\bar{z}^n}u\rangle&=\sum_{k=n}^{\infty}\frac{k-n+1}{(k+1)^2}|u_k|^2\\
\mathrm{Re}[\langle T_{z^n}u,T_{C|z|^{s}}u\rangle-\langle T_{\bar{z}^n}u,T_{\bar{C}|z|^{s}}u\rangle]&=\sum_{k=0}^{\infty}\frac{2mn\,\mathrm{Re}[u_k\bar{u}_{k+n}\bar{C}]}{(k+n+1)(2k+2n+m+2)(2k+m+2)}
\end{align*}

We then use (\ref{expand}) to conclude that
\[
\langle [T_{z^n+C|z|^{s}}^*,T_{z^n+C|z|^{s}}]u,u\rangle\geq0
\]
if and only if
\[
\sum_{k=0}^{n-1}\frac{|u_k|^2}{k+n+1}+\sum_{k=n}^{\infty}\frac{n^2|u_k|^2}{(k+1)^2(k+n+1)}+\sum_{k=0}^{\infty}\frac{4sn\,\mathrm{Re}[u_k\bar{u}_{k+n}\bar{C}]}{(k+n+1)(2k+2n+s+2)(2k+s+2)}\geq0
\]
From this expression we see that we may choose the sequence $\{u_k\}$ so that $\mathrm{Re}[u_k\bar{u}_{k+n}\bar{C}]=-|Cu_ku_{k+n}|$ for all $k\geq0$.  Thus, we may without loss of generality assume that each $u_k$ is real and non-negative, in which case the left-hand side of the above inequality is minimized by a negative $C$.  It follows that we may assume $C$ is positive and work with the condition
\[
\sum_{k=0}^{n-1}\frac{u_k^2}{k+n+1}+\sum_{k=n}^{\infty}\frac{n^2u_k^2}{(k+1)^2(k+n+1)}\geq4snC\sum_{k=0}^{\infty}\frac{u_ku_{k+n}}{(k+n+1)(2k+2n+s+2)(2k+s+2)}
\]
Since this must be true for all suitable sequences $\{u_k\}$, we set
\begin{equation}\label{utry}
u_k=
\begin{cases}
k+1,\qquad\qquad &A\leq k\leq B,\\
0 & \mathrm{otherwise}
\end{cases}
\end{equation}
where $A$ and $B$ are constants to be determined later, but we assume $A\geq n$.  Then the above inequality becomes
\[
\sum_{k=A}^{B}\frac{n^2}{(k+n+1)}\geq snC\sum_{k=A}^{B-n}\frac{k+1}{(k+n+1+s/2)(k+1+s/2)}
\]
If we set $A=t$ and $B=t^2$ for a very large integer $t$ (which we will eventually send to infinity), then we get
\[
n^2\log(t)+O(1)\geq snC\log(t)+O(1),\qquad\qquad t\rightarrow\infty.
\]
Dividing through by $\log(t)$ and sending $t\rightarrow\infty$ shows that $|C|\leq\frac{n}{s}$ is a necessary condition for hyponormality of $T_{z^n+C|z|^s}$.

To prove the statement about sufficiency in the case $s\geq2n$, notice that our above calculations show that $T_{z^n+C|z|^s}$ is hyponormal if and only if
\begin{align}\label{cinf}
|C|\leq\inf\left\{\frac{\sum_{k=0}^{n-1}\frac{u_k^2}{k+n+1}+\sum_{k=n}^{\infty}\frac{n^2u_k^2}{(k+1)^2(k+n+1)}}{sn\sum_{k=0}^{\infty}\frac{u_ku_{k+n}}{(k+n+1)(k+1+s/2)(k+n+1+s/2)}}\right\},
\end{align}
where the infimum is taken over all sequences $\{u_k\}$ such that $u_k\geq0$ for each $k$ and
\[
\sum_{k=0}^{\infty}\frac{u_k^2}{k+1}<\infty
\]
Apply the Cauchy-Schwarz inequality to the denominator inside the infimum to see
\begin{align*}
&\sum_{k=0}^{\infty}\frac{u_ku_{k+n}}{(k+n+1)(k+1+s/2)(k+n+1+s/2)}\\
&\,\leq\sqrt{\sum_{k=0}^{\infty}\frac{u_k^2}{(k+n+1)(k+1+s/2)(k+n+1+s/2)}\cdot\sum_{k=n}^{\infty}\frac{u_{k}^2}{(k+1)(k-n+1+s/2)(k+1+s/2)}}
\end{align*}
Now, the fact that $s\geq2n$ implies
\begin{align*}
\frac{1}{(k+n+1+s/2)(k+1+s/2)}&\leq\frac{1}{n^2}\qquad\qquad\quad\qquad\qquad\qquad k=0,1,\ldots,n-1\\
\frac{1}{(k+1+s/2)(k-n+1+s/2)}&\leq\frac{1}{(k+1)(k+n+1)}\qquad\qquad k=n,n+1,\ldots\\
\frac{1}{(k+1+s/2)(k+n+1+s/2)}&\leq\frac{1}{(k+1)^2},\qquad\qquad\qquad\qquad k=n,n+1,\ldots
\end{align*}
all of which can be checked by elementary calculation.  Therefore, we conclude that
\begin{align*}
&\sum_{k=0}^{\infty}\frac{u_ku_{k+n}}{(k+n+1)(k+1+s/2)(k+n+1+s/2)}\\
&\qquad\qquad\qquad\qquad\qquad\qquad\qquad\leq\sum_{k=0}^{n-1}\frac{u_k^2}{n^2(k+n+1)}+\sum_{k=n}^{\infty}\frac{u_k^2}{(k+1)^2(k+n+1)}
\end{align*}
It follows that the infimum in \eqref{cinf} is at least $\frac{n}{s}$ and hence $|C|\leq\frac{n}{s}$ is sufficient to guarantee the hyponormality of $T_{z^n+C|z|^s}$ in the case $s\geq2n$.
\end{proof}

\noindent\textit{Remark.}  Notice that the extremal problem posed in \eqref{cinf} is similar to that considered in \cite{BKLSS}, but is not identical, so the results of that paper cannot be directly applied.

\begin{corollary}\label{bigm}
If $n\in\bbN$ and $C\in\bbC\setminus\{0\}$, then there exists an $s\in(0,\infty)$ such that $T_{z^n+C|z|^s}$ is not hyponormal.
\end{corollary}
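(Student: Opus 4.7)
The plan is to extract this as an immediate consequence of the necessary condition in Theorem \ref{better1}. That theorem asserts that hyponormality of $T_{z^n+C|z|^s}$ forces $|C|\leq n/s$. Since $C\neq 0$, the inequality $|C|\leq n/s$ is equivalent to $s\leq n/|C|$, which fails as soon as $s$ is chosen large enough.

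Concretely, I would fix any $s\in(0,\infty)$ with $s>n/|C|$ (for example, $s=2n/|C|$ works). For such an $s$ we have $n/s<|C|$, so the necessary condition $|C|\leq n/s$ is violated. By the contrapositive of Theorem \ref{better1}, the operator $T_{z^n+C|z|^s}$ cannot be hyponormal for this choice of $s$.

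There is no real obstacle here; the content is entirely contained in Theorem \ref{better1}, and the corollary just amounts to solving a trivial inequality in $s$. The only thing worth emphasizing in the write-up is that $C\neq 0$ is essential: if $C=0$ then $T_{z^n}$ is hyponormal for every $s$ (indeed the symbol no longer depends on $s$), so the hypothesis $C\in\bbC\setminus\{0\}$ cannot be dropped. One could also note, as a qualitative takeaway, that this shows the necessary bound $|C|\leq n/s$ genuinely constrains hyponormality for all $n$ and all nonzero $C$, not merely in the sufficient regime $s\geq 2n$.
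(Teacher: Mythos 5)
Your proof is correct and is exactly the argument the paper intends: the corollary is stated without proof as an immediate consequence of the necessary condition $|C|\le n/s$ from Theorem \ref{better1}, and choosing any $s>n/|C|$ violates that condition. Your remark that $C\neq 0$ is essential is also accurate.
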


\medskip

We do not know exactly what happens when $s<2n$, but the following example shows that the conclusion of Theorem \ref{better1} does not extend to all pairs $(s,n)$.

\subsection{Example: The case $s<2n$.}\label{ex2}

By considering the case $n=7$ and $s=1$, we will see that the conclusion of Theorem \ref{better1} cannot be strengthened to include all pairs $(n,s)$.  Indeed, the operator $T_{z^{7}+C|z|}$ is hyponormal if and only if
\begin{align}\label{ccup}
|C|&\leq\inf\left\{\frac{\sum_{k=0}^5\frac{u_k^2}{k+8}+\sum_{k=6}^{\infty}\frac{49u_k^2}{(k+1)^2(k+8)}}{7\sum_{k=0}^{\infty}\frac{u_ku_{k+7}}{(k+8)(k+8+1/2)(k+1+1/2)}}\right\}
\end{align}
where the infimum is taken over all sequences $\{u_k\}$ such that $u_k\geq0$ for each $k$ and
\[
\sum_{k=0}^{\infty}\frac{u_k^2}{k+1}<\infty
\]
If the infimum (\ref{ccup}) is exactly $7$, then we recover the upper bound of $n/s$ in this case.  However, if we define
\[
u_k=
\begin{cases}
1\qquad & 0\leq k\leq 20\\
0 & k>20.
\end{cases}
\]
then the quantity inside the braces in (\ref{ccup}) is $6.41441\ldots$.  Since this value is smaller than $7$, we conclude that there are complex numbers $C$ such that $|C|<7$ and $T_{z^{7}+C|z|}$ is not hyponormal. % A similar calculation with the same choice of the sequence $\{u_k\}$ demonstrates the failure of the converse to Theorem \ref{better1} in the case $n=8$ and $s=2$.

%\subsection{The Case $n=1$.}

%In the specific case $n=1$ many of our formulas become more tractable and we can determine some additional information about the operator $T_{z^n+C|z|^s}$ when $s<2n$.  Indeed, in this case, we have
%\[
%\lim_{s\rightarrow0^+}sI_{1,s}=\inf\left\{\frac{\sum_{k=0}^{\infty}\frac{u_k^2}{(k+1)^2(k+2)}}{\sum_{k=0}^{\infty}\frac{u_ku_{k+1}}{(k+1)(k+2)^2}}\right\}
%\]
%If we define
%\[
%p_k=\frac{u_k}{k+1},
%\]
%then this becomes
%\begin{equation}\label{inf1}
%\inf\left\{\frac{\sum_{k=0}^{\infty}\frac{p_k^2}{k+2}}{\sum_{k=0}^{\infty}\frac{p_kp_{k+1}}{k+2}}\right\}
%\end{equation}
%where the infimum is taken over all sequences $\{p_k\}$ such that $\sum p_k(k+1)^{-3}<\infty$.
%The infimum \eqref{inf1} is precisely the quantity calculated in \cite{BKLSS} with the choice $\omega_n=\frac{1}{n+1}$, which corresponds to the Bergman space in the context of the paper \cite{BKLSS}.  This infimum was calculated in \cite[Section 5]{BKLSS} and so we find
%\[
%\lim_{s\rightarrow0^+}sI_{1,s}=\frac{2\sqrt{2}}{3}<1.
%\]

\section{Additive Perturbations}\label{additive}

In this section we will explore Toeplitz operators of the form $T_{f+g}$ and address question (Q-IV).  Specifically, we will consider $T_{\varphi}$, where $\varphi(z)=z^n|z|^s+az^m|z|^t$, where $a\in\bbC$; $m,n\in\bbN$; $m<n$; and $s,t\in[0,\infty)$.  We first need the following version of Lemma \ref{zm}.

\begin{lemma}\label{zm2}
If $j,k\in\bbN_0$ and $t\in[0,\infty)$, then
\[
P(z^k\bar{z}^j|z|^t)=\begin{cases}
0 \qquad\qquad & j>k\\
\frac{2(k-j+1)}{2k+t+2}z^{k-j} & j\leq k
\end{cases}
\]
\end{lemma}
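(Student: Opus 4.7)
The plan is to treat the cases $j\leq k$ and $j>k$ separately, reducing the first directly to Lemma \ref{zm} and handling the second by an orthogonality argument.

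For $j\leq k$, I would invoke the pointwise identity $z\bar{z}=|z|^{2}$ on $\bbD\setminus\{0\}$, which gives
\[
z^{k}\bar{z}^{j}|z|^{t}=z^{k-j}|z|^{2j+t}
\]
almost everywhere on $\bbD$, which is enough since we work in $L^{2}(\bbD,dA)$. Since $k-j\in\bbN_{0}$ and $2j+t\in[0,\infty)$, Lemma \ref{zm} applies with parameters $k'=k-j$ and $t'=2j+t$, yielding
\[
P\bigl(z^{k-j}|z|^{2j+t}\bigr)=\frac{2(k-j+1)}{2(k-j)+(2j+t)+2}\,z^{k-j}=\frac{2(k-j+1)}{2k+t+2}\,z^{k-j},
\]
which is exactly the formula claimed.

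For $j>k$, I would show that $z^{k}\bar{z}^{j}|z|^{t}$ is orthogonal in $L^{2}(\bbD,dA)$ to every analytic monomial $z^{q}$, $q\in\bbN_{0}$. Since $\{z^{q}\}_{q\geq 0}$ spans a dense subspace of $A^{2}(\bbD)$, this forces $P(z^{k}\bar{z}^{j}|z|^{t})=0$. In polar coordinates $z=re^{i\theta}$, the inner product $\langle z^{q},z^{k}\bar{z}^{j}|z|^{t}\rangle$ factors as a nonnegative radial integral times the angular factor $\int_{0}^{2\pi}e^{i(q+j-k)\theta}\,d\theta$. Because $j>k$ and $q\geq 0$, the integer $q+j-k$ is strictly positive, so this angular integral vanishes.

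The only delicate point, and the main (minor) obstacle, is ensuring the identity $z^{k}\bar{z}^{j}=z^{k-j}|z|^{2j}$ is used only in its range of validity: if $j>k$ the exponent $k-j$ is negative and the rewriting fails, which is precisely why the two cases must be split. Once the split is made, no further computation beyond invoking Lemma \ref{zm} and the orthogonality of $\{e^{in\theta}\}_{n\in\bbZ}$ is required.
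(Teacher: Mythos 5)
Your proof is correct and takes essentially the same route as the paper: the case $j\leq k$ is reduced to Lemma \ref{zm} via the rewriting $z^{k}\bar{z}^{j}|z|^{t}=z^{k-j}|z|^{2j+t}$, and the case $j>k$ is handled by orthogonality of $z^{k}\bar{z}^{j}|z|^{t}$ to every monomial $z^{q}$. The paper's proof states both steps in one line; you have simply supplied the details it leaves implicit.
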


\begin{proof}
The case $j\leq k$ follows from Lemma \ref{zm}.  If $j>k$, then $z^k\bar{z}^j|z|^t\perp z^m$ for all $m\in\bbN_0$, so the projection to the Bergman space is $0$.
\end{proof}

Define
\begin{align*}
\sigma_k&=
\begin{cases}
\frac{4(k+n+1)}{(2(n+k)+s+2)^2}\qquad\qquad\qquad\qquad\qquad\qquad\qquad\qquad\qquad & 0\leq k< n\\
\frac{4(k+n+1)}{(2(n+k)+s+2)^2}-\frac{4(k-n+1)}{(2k+s+2)^2} & k\geq n
\end{cases}
\\
\omega_k&=
\begin{cases}
\frac{4(k+m+1)}{(2(m+k)+t+2)^2}\qquad\qquad\qquad\qquad\qquad\qquad\qquad\qquad\qquad & 0\leq k< m\\
\frac{4(k+m+1)}{(2(m+k)+t+2)^2}-\frac{4(k-m+1)}{(2k+t+2)^2} & k\geq m
\end{cases}
\\
\delta_k&=
\begin{cases}
\frac{4(k+n+1)}{(2(k+n)+s+2)(2(k+n)+t+2)}\qquad\qquad\qquad\qquad\qquad\qquad\qquad & 0\leq k<m\\
\frac{4(k+n+1)}{(2(k+n)+s+2)(2(k+n)+t+2)}-\frac{4(k-m+1)}{(2(k+n-m)+s+2)(2k+t+2)} & k\geq m
\end{cases}
\end{align*}
It is an elementary calculation to verify that each of the sequences $\{\sigma_k\}$, $\{\omega_k\}$, and $\{\delta_k\}$ contains only positive real numbers.

Mimicking the calculations in the proof of \cite[Theorem 4]{FL}, one finds the operator $T_{\varphi}$ (with $\varphi$ defined as at the beginning of this section)  is hyponormal if and only if
\[
|a|^2\sum_{k=0}^{\infty}\omega_k|u_k|^2+2\sum_{k=0}^{\infty}\delta_k\Real[\bar{a}u_k\bar{u}_{k+n-m}]+\sum_{k=0}^{\infty}\sigma_k|u_k|^2\geq0.
\]
for all sequences $\{u_k\}$ such that $\sum u_kz^k\in A^2(\bbD)$.  We are free to choose the phases of each $u_k$ so we may assume that every term in the middle sum is real and negative (to give us the worst case scenario).  Thus, we rewrite the above condition as
\begin{equation}\label{aeq}
Q_{u}(|a|):=|a|^2\sum_{k=0}^{\infty}\omega_k|u_k|^2-2|a|\sum_{k=0}^{\infty}\delta_k|u_k\bar{u}_{k+n-m}|+\sum_{k=0}^{\infty}\sigma_k|u_k|^2\geq0.
\end{equation}
Notice that $Q_{u}$ is a quadratic polynomial.  The statement that $T_{\varphi}$ is hyponormal is equivalent to the statement that $|a|$ does not lie between the real roots of $Q_{u}$ for any choice of $\{u_k\}$.  %Heuristically this makes sense in that if $|a|$ is very large or very small, then one can think of $\varphi$ as a perturbation of the symbol considered in Theorem \ref{better2}.
We have proven the following result.

\begin{prop}\label{better3a}
If $\varphi(z)=z^n|z|^s+az^m|z|^t$, where $a\in\bbC$; $m,n\in\bbN$; $m<n$; and $s,t\in[0,\infty)$, then $T_{\varphi}$ is hyponormal if and only if $|a|$ never lies between
\[
\frac{\sum_{k=0}^{\infty}\delta_k|u_k\bar{u}_{k+n-m}|-\sqrt{\left(\sum_{k=0}^{\infty}\delta_k|u_k\bar{u}_{k+n-m}|\right)^2-\sum_{k=0}^{\infty}\omega_k|u_k|^2\sum_{k=0}^{\infty}\sigma_k|u_k|^2}}{\sum_{k=0}^{\infty}\omega_k|u_k|^2}
\]
and
\[
\frac{\sum_{k=0}^{\infty}\delta_k|u_k\bar{u}_{k+n-m}|+\sqrt{\left(\sum_{k=0}^{\infty}\delta_k|u_k\bar{u}_{k+n-m}|\right)^2-\sum_{k=0}^{\infty}\omega_k|u_k|^2\sum_{k=0}^{\infty}\sigma_k|u_k|^2}}{\sum_{k=0}^{\infty}\omega_k|u_k|^2}
\]
for any $\{u_k\}_{k=0}^{\infty}$ satisfying $\sum |u_k|^2/(k+1)<\infty$.
\end{prop}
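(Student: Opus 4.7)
The plan is to follow the outline sketched just before the proposition and then read off the conclusion from a scalar quadratic inequality.

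First, I would apply the identity (\ref{expand}) with $T=T_{z^n|z|^s}$ and $S=T_{az^m|z|^t}$, so that $T+S=T_\varphi$ and the six resulting inner products organize $\langle[T_\varphi^*,T_\varphi]u,u\rangle$ into three contributions: a pure $T$ part, a pure $S$ part (which carries the factor $|a|^2$), and a cross part (which is linear in $a$ or $\bar a$).

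Second, I would expand a general $u=\sum_{k\geq 0}u_kz^k\in A^2(\bbD)$ in the monomial basis and compute each inner product using Lemma~\ref{zm2}. Explicitly, $T_{z^n|z|^s}z^k=\frac{2(k+n+1)}{2(k+n)+s+2}\,z^{k+n}$, while $T_{z^n|z|^s}^{*}z^k=T_{\bar z^n|z|^s}z^k$ equals $\frac{2(k-n+1)}{2k+s+2}\,z^{k-n}$ for $k\geq n$ and vanishes otherwise, with analogous formulas for $(m,t)$ in place of $(n,s)$. Using $\|z^j\|^2=1/(j+1)$, the pure $T$ contribution becomes $\sum_k\sigma_k|u_k|^2$, the pure $S$ contribution becomes $|a|^2\sum_k\omega_k|u_k|^2$, and, after an index shift, the cross part collapses to $2\Real\bigl[\bar a\sum_k\delta_k u_k\bar u_{k+n-m}\bigr]$. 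A routine bookkeeping check confirms that the coefficients match the $\sigma_k$, $\omega_k$, and $\delta_k$ defined in the text, with the case splits arising from the vanishing of $T^*$ and $S^*$ on low-degree monomials.

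Third, since the phases of the $u_k$ and of $a$ may be chosen independently, I would optimize to produce the worst case, thereby replacing the cross part by $-2|a|\sum_k\delta_k|u_k u_{k+n-m}|$. Hyponormality of $T_\varphi$ is then equivalent to $Q_u(|a|)\geq 0$ for every admissible $\{u_k\}$, as displayed in (\ref{aeq}). Because $\omega_k>0$ for all $k$, the function $Q_u$ is a real quadratic in $|a|$ with positive leading coefficient, so $Q_u(|a|)<0$ exactly when $|a|$ lies strictly between the two real roots of $Q_u$; the quadratic formula identifies these roots with the two expressions displayed in the proposition. Negating this condition uniformly over all admissible $\{u_k\}$ yields the claimed equivalence.

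The main technical obstacle is the cross-term identification in the second step: one must reindex both $\langle Tu,Su\rangle$ and $\langle T^*u,S^*u\rangle$ so that each sum is indexed by the same pair $(k,k+n-m)$ and then verify that their difference reproduces $\delta_k$ with the correct piecewise behavior at $k<m$ versus $k\geq m$. This requires care because $T^*$ and $S^*$ truncate at different thresholds ($k=n$ and $k=m$ respectively), yet after the index shift only the $k\geq m$ truncation leaves a trace in the piecewise formula for $\delta_k$.
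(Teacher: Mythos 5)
Your proposal is correct and follows essentially the same route as the paper: apply the expansion formula \eqref{expand} with $T=T_{z^n|z|^s}$ and $S=T_{az^m|z|^t}$, compute the diagonal and cross terms coefficientwise via Lemma \ref{zm2} to arrive at \eqref{aeq}, choose phases to realize the worst case, and read the conclusion off the roots of the quadratic $Q_u$. You supply more detail than the paper (which defers the coefficient computation to ``mimicking'' the proof of Theorem 4 of Fleeman--Liaw), and your cross-term bookkeeping --- including the observation that after the reindexing $j=k-n+m$ only the single threshold $j\geq m$ survives in $\delta_k$ --- is accurate.
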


\noindent\textit{Remark.}  If the quantity under the square roots in Proposition \ref{better3a} is negative for some particular choice of sequence $\{u_k\}$, then that sequence places no constraint on $|a|$.  %For example, if $m=n$ and $s=t$, then $\sigma_k=\omega_k=\delta_k$ for all $k$ and hence the Cauchy-Schwarz inequality implies the quantity under the square root is never positive.  This means there are no constraints on $a$ for $T_{\varphi}$ to be hyponormal.

\medskip

We see that for every suitable sequence $\{u_k\}$ we obtain an (possibly empty) open annulus such that if $a$ lies in this annulus, then $T_{\varphi}$ is not hyponormal.  Proposition \ref{better3a} states that $T_{\varphi}$ is hyponormal if and only if $a$ lies outside the union of all of these annuli.  It is not obvious how to describe this uncountable union of annuli, though the following result sheds some light on the situation. % For this result, we think of $m,n,s,t$ as being fixed and consider how our choice of the constant $a$ effects the hyponormality of $T_{\varphi}$.

\begin{theorem}\label{better3b}
If $\varphi(z)$ is as in Proposition \ref{better3a} with $m$, $n$, $s$, and $t$ fixed, then there exist values of $a\in\bbC$ for which $T_{\varphi}$ is hyponormal.  If we further assume that $ms\neq nt$, then there exist values of $a\in\bbC$ for which $T_{\varphi}$ is not hyponormal.
\end{theorem}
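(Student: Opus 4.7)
My plan handles the two statements of Theorem \ref{better3b} separately.

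For part (1), I take $a = 0$: then $T_\varphi = T_{z^n|z|^s}$, and a direct application of Lemmas \ref{zm} and \ref{zm2} shows that both $T_\varphi$ and $T_\varphi^*$ send $z^k$ to a scalar multiple of $z^{k\pm n}$ (or to $0$). Hence the commutator $[T_\varphi^*, T_\varphi]$ is diagonal in the basis $\{z^k\}$ and sends $z^k$ to $(k+1)\sigma_k\, z^k$. Since every $\sigma_k$ is positive (as recorded immediately after its definition), this commutator is positive semidefinite, and $T_\varphi$ is hyponormal.

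For part (2), I will invoke Proposition \ref{better3a}: it suffices to exhibit a nonnegative sequence $\{u_k\}$ satisfying $B(u)^2 > A(u)C(u)$, where $A(u) = \sum \omega_k u_k^2$, $B(u) = \sum \delta_k u_k u_{k+n-m}$, $C(u) = \sum \sigma_k u_k^2$, because such a $u$ then produces a nonempty open annulus of values of $|a|$ for which $T_\varphi$ is not hyponormal. Mimicking the proof of Theorem \ref{better1}, I set $u_k = k+1$ for $A \leq k \leq A^2$ and $u_k = 0$ otherwise, and let $A \to \infty$.

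The main technical step is an asymptotic analysis. A careful expansion of $(1+x)^{-1}$ to order $x^3$ in the denominators defining $\sigma_k$, $\omega_k$, and $\delta_k$ yields
\[
\sigma_k = \frac{n(n+s)}{k^3} + O(k^{-4}), \qquad \omega_k = \frac{m(m+t)}{k^3} + O(k^{-4}), \qquad \delta_k = \frac{2mn+ms+nt}{2k^3} + O(k^{-4}).
\]
Multiplying each by $(k+1)^2$ and summing from $A$ to $A^2$ produces a leading $\log A$ contribution, so that
\[
A(u) \sim m(m+t)\log A, \qquad C(u) \sim n(n+s)\log A, \qquad B(u) \sim \frac{2mn+ms+nt}{2}\log A,
\]
and hence $B(u)^2/[A(u)C(u)]$ converges to $(2mn+ms+nt)^2/[4mn(m+t)(n+s)]$ as $A \to \infty$.

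The final and most delicate step, which I expect to be the main obstacle, is the clean algebraic identity
\[
(2mn+ms+nt)^2 - 4 mn(m+t)(n+s) = (ms-nt)^2,
\]
which I plan to verify by direct expansion. This identity shows that the limiting ratio strictly exceeds $1$ exactly when $ms \neq nt$. Under that hypothesis, $B(u)^2 > A(u)C(u)$ for all sufficiently large $A$, and Proposition \ref{better3a} then supplies the required values of $a$ for which $T_\varphi$ is not hyponormal.
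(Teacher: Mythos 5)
Your proposal is correct, and your argument for the second assertion is essentially the paper's own: the same trial vector $u_k=k+1$ on a dyadic-type window, the same $k^{-3}$ asymptotics for $\sigma_k,\omega_k,\delta_k$, and the same discriminant computation (your identity $(2mn+ms+nt)^2-4mn(m+t)(n+s)=(ms-nt)^2$ is exactly what makes the paper's two limiting roots $\frac{2mn+ms+nt\pm|ms-nt|}{2m(m+t)}$ distinct). Where you diverge is the first assertion. Taking $a=0$ does satisfy the literal statement, and your diagonal-commutator argument (the commutator sends $z^k$ to $(k+1)\sigma_k z^k$, with $\sigma_k>0$) is sound; but it reduces the claim to the hyponormality of $T_{z^n|z|^s}$ (Theorem \ref{better2}) and so carries no information about nonzero perturbations. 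The paper instead sandwiches $\sigma_k,\omega_k,\delta_k$ between constant multiples of $(k+1)^{-3}$ and applies Cauchy--Schwarz to bound $Q_u(|a|)$ below by a fixed quadratic in $|a|$ with positive leading and constant terms, which shows hyponormality for \emph{all} sufficiently small and all sufficiently large $|a|$. That stronger conclusion is what powers the explicit interval in the remark following the theorem, so if you keep the $a=0$ shortcut you should at least note that it proves strictly less than the paper's argument does. Two cosmetic points: your symbol $A$ does double duty as the window endpoint and as $A(u)=\sum\omega_k u_k^2$, and you should record (as the paper does) that positivity of all three sequences is what guarantees the two roots of $Q_u$ are real and positive, so that the resulting annulus of bad $|a|$ is genuinely nonempty.
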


\begin{proof}
First we will prove that there exist values of $a\in\bbC$ for which $T_{\varphi}$ is hyponormal.  Notice that there are positive constants $C_j$ for $j=1,\ldots,6$ such that
\[
\frac{C_1}{(k+1)^3}\leq|\sigma_k|\leq\frac{C_2}{(k+1)^3},\qquad \frac{C_3}{(k+1)^3}\leq|\omega_k|\leq\frac{C_4}{(k+1)^3},\qquad \frac{C_5}{(k+1)^3}\leq|\delta_k|\leq\frac{C_6}{(k+1)^3}
\]
Let
\[
C_+=\max\{C_1,\ldots,C_6\}\qquad\qquad C_-=\min\{C_1,\ldots,C_6\}.
\]
Then the Cauchy-Schwarz inequality implies
\begin{align*}
Q_{u}(|a|)&\geq|a|^2\left(C_-\sum_{k=0}^{\infty}\frac{|u_k|^2}{(k+1)^3}\right)+C_-\sum_{k=0}^{\infty}\frac{|u_k|^2}{(k+1)^3}\\
&\qquad\qquad\qquad-\frac{2|a|C_+(n-m+1)^{3/2}}{C_-}\left(C_-\sum_{k=0}^{\infty}\frac{|u_k|^2}{(k+1)^3}\right)\\
&=\left(C_-\sum_{k=0}^{\infty}\frac{|u_k|^2}{(k+1)^3}\right)\left(|a|^2-\frac{2|a|C_+(n-m+1)^{3/2}}{C_-}+1\right)
\end{align*}
which is positive for all non-trivial choices of $\{u_k\}$ as long as $|a|$ is sufficiently large or sufficiently small.

To prove the second claim, we use a trial vector as in \eqref{utry} (with $A=x$ and $B=x^2$) to show that the bounds given by Proposition \ref{better3a} are positive and unequal.  To complete the calculation, we note that
\begin{align*}
\sigma_k&=\frac{n(n+s)}{k^3}+\mco(k^{-4}),\qquad\qquad k\rightarrow\infty,\\
\omega_k&=\frac{m(m+t)}{k^3}+\mco(k^{-4}),\qquad\qquad k\rightarrow\infty,\\
\delta_k&=\frac{mn+\frac{ms+nt}{2}}{k^3}+\mco(k^{-4}),\qquad\qquad k\rightarrow\infty.
\end{align*}
Using these formulas, we have (as $x\rightarrow\infty$)
\begin{align*}
&
\frac{\sum_{k=0}^{\infty}\delta_k|u_k\bar{u}_{k+n-m}|\pm\sqrt{\left(\sum_{k=0}^{\infty}\delta_k|u_k\bar{u}_{k+n-m}|\right)^2-\sum_{k=0}^{\infty}\omega_k|u_k|^2\sum_{k=0}^{\infty}\sigma_k|u_k|^2}}{\sum_{k=0}^{\infty}\omega_k|u_k|^2}\\
&=\frac{\frac{2mn+ms+nt}{2}\log(x)+\mco(1)\pm\log(x)\sqrt{(mn+\frac{ms+nt}{2})^2-mn(n+s)(m+t)+\mco(1)}}{m(m+t)\log(x)+\mco(1)}\\
&=\frac{2mn+ms+nt\pm|ms-nt|}{2m(m+t)}+o(1)
\end{align*}
Thus, as long as $ms\neq nt$ it is true that
\[
|a|\in\left(\frac{2mn+ms+nt-|ms-nt|}{2m(m+t)},\frac{2mn+ms+nt+|ms-nt|}{2m(m+t)}\right)
\]
implies $T_{\varphi}$ is not hyponormal.
\end{proof}

\noindent\textit{Remark.}  We can actually refine the statement of Theorem \ref{better3b} by using the notation defined in its proof.  Indeed, by using that notation we can say that if $|a|$ is not in the interval
\begin{align*}
\left[(n-m+1)^{3/2}\frac{C_+}{C_-}-\sqrt{\frac{C_+^2(n-m+1)^3}{C_-^2}-1},(n-m+1)^{3/2}\frac{C_+}{C_-}+\sqrt{\frac{C_+^2(n-m+1)^3}{C_-^2}-1}\right]
\end{align*}
then $T_{\varphi}$ is hyponormal.  This condition is a substantial improvement to the conditions for hyponormality given in \cite[Remark after Theorem 4]{FL}.

\medskip

We can use the same ideas as above to prove an extension of \cite[Theorem 5]{FL}.  Define
\begin{align*}
\sigma_k'&=
\begin{cases}
\frac{4(k+n+1)}{(2(n+k)+s+2)^2}\qquad\qquad\qquad\qquad\qquad\qquad\qquad\qquad\qquad & 0\leq k< n\\
\frac{4(k+n+1)}{(2(n+k)+s+2)^2}-\frac{4(k-n+1)}{(2k+s+2)^2} & k\geq n
\end{cases}
\\
\omega_k'&=
\begin{cases}
-\frac{4(k+m+1)}{(2(m+k)+t+2)^2}\qquad\qquad\qquad\qquad\qquad\qquad\qquad\qquad\qquad & 0\leq k< m\\
\frac{4(k-m+1)}{(2k+t+2)^2}-\frac{4(k+m+1)}{(2(m+k)+t+2)^2} & k\geq m
\end{cases}
\\
\delta_k'&=
\begin{cases}
\frac{4(k+n+1)}{(2(k+n)+s+2)(2(k+n+m)+t+2)}-\frac{4(k+m+1)}{(2(k+n+m)+s+2)(2(k+m)+t+2)}
\end{cases}
\end{align*}
It is an elementary calculation to see that $\omega_k'<0$ for all $k$ and $\sigma_k>0$ for all $k$.  %The sign of $\delta_k'$ depends on our choice of $m,n,s,t$.  More precisely, $\delta_k'>0$ if and only if
%\[
%2k(nt-ms)-m(2ms+st+2s)+n(2nt+st+2t)>0.
%\]
With this notation and nearly the same calculations as above, we can prove our next result.

\begin{prop}\label{better4a}
If $\varphi(z)=z^n|z|^s+a\bar{z}^m|z|^t$, where $a\in\bbC$; $m,n\in\bbN$; $s,t\in[0,\infty)$; and $\delta_k'>0$ for all $k$, then $T_{\varphi}$ is hyponormal if and only if
\[
|a|\leq\frac{\sum_{k=0}^{\infty}|\delta_k'u_k\bar{u}_{k+n+m}|-\sqrt{\left(\sum_{k=0}^{\infty}|\delta_k'u_k\bar{u}_{k+n+m}|\right)^2-\sum_{k=0}^{\infty}\omega_k'|u_k|^2\sum_{k=0}^{\infty}\sigma_k'|u_k|^2}}{\sum_{k=0}^{\infty}\omega_k'|u_k|^2}
\]
for every $\{u_k\}_{k=0}^{\infty}$ satisfying $\sum |u_k|^2/(k+1)<\infty$.
\end{prop}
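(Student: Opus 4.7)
The plan is to imitate the derivation that precedes Proposition \ref{better3a}, with the conjugation in $a\bar z^m|z|^t$ accounting for the sign changes and new index shift. I would set $T=T_{z^n|z|^s}$ and $S=aT_{\bar z^m|z|^t}$, apply formula (\ref{expand}) to $\langle[T_\varphi^*,T_\varphi]u,u\rangle$, and convert each of the six inner products into an explicit diagonal series in the Taylor coefficients of $u=\sum u_k z^k$ via Lemma \ref{zm2}. The two squared terms work out immediately to $\|Tu\|^2-\|T^*u\|^2=\sum\sigma_k'|u_k|^2$ and $\|Su\|^2-\|S^*u\|^2=|a|^2\sum\omega_k'|u_k|^2$, matching the tabulated expressions.

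The cross term requires the most care. In $\langle Tu,Su\rangle$ the monomials $z^{k+n}$ appearing in $Tu$ must pair with monomials $z^{k'-m}$ appearing in $Su$, forcing $k'=k+n+m$; in $\langle T^*u,S^*u\rangle$ the pairing $z^{k-n}\leftrightarrow z^{k'+m}$ forces $k'=k-n-m$. After reindexing both sums by the common offset $n+m$ and collecting, one obtains $\langle Tu,Su\rangle-\langle T^*u,S^*u\rangle=\bar a\sum_k u_k\bar u_{k+n+m}\delta_k'$, with $\delta_k'$ defined as exactly this coefficient-level difference. Substituting into (\ref{expand}) gives the hyponormality condition
\[
\sum\sigma_k'|u_k|^2+2\Real\Bigl[\bar a\sum_k u_k\bar u_{k+n+m}\delta_k'\Bigr]+|a|^2\sum\omega_k'|u_k|^2\geq 0
\]
for every $u=\sum u_k z^k\in A^2(\bbD)$. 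As in the proof of Proposition \ref{better3a}, the phases of $\{u_k\}$ can be chosen along a common arithmetic progression (this is where $\delta_k'>0$ is used) so that every summand in the middle sum is a negative real number; the condition then collapses to $Q_u(|a|)\geq 0$, where $Q_u(x):=x^2\sum\omega_k'|u_k|^2-2x\sum\delta_k'|u_k u_{k+n+m}|+\sum\sigma_k'|u_k|^2$, required to hold for every admissible non-negative sequence $\{u_k\}$.

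Because $\sigma_k'>0$ and $\omega_k'<0$, the quadratic $Q_u$ is concave in $x$ with positive value at $x=0$, its discriminant is automatically non-negative, and its two real roots have non-positive product, so exactly one root is non-negative. Hence $Q_u(|a|)\geq 0$ for $|a|\geq 0$ is equivalent to $|a|$ being at most this non-negative root, which the quadratic formula (with the negative denominator $\sum\omega_k'|u_k|^2$ producing the minus sign in front of the square root) identifies as the bound displayed in the proposition. Imposing this for every $\{u_k\}$ yields the stated equivalence. The principal bookkeeping obstacle is the cross-term derivation, where the two different index shifts must be correctly aligned so that they combine to produce $\delta_k'$; the rest is a faithful adaptation of the preceding argument.
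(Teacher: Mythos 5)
Your proposal is correct and follows essentially the same route as the paper, which itself gives no details beyond ``nearly the same calculations as above'' (i.e.\ the derivation preceding Proposition \ref{better3a}): apply \eqref{expand}, compute the diagonal series via Lemma \ref{zm2}, align phases along the residue classes mod $n+m$, and solve the resulting concave quadratic in $|a|$, whose unique non-negative root (leading coefficient $\sum\omega_k'|u_k|^2<0$, constant term $\sum\sigma_k'|u_k|^2>0$) is exactly the displayed bound. The only nitpick is that your identity $\langle Tu,Su\rangle-\langle T^*u,S^*u\rangle=\bar a\sum_k u_k\bar u_{k+n+m}\delta_k'$ holds only after taking real parts (the two inner products carry $\bar a u_k\bar u_{k+n+m}$ and $a\bar u_k u_{k+n+m}$ respectively), but since \eqref{expand} only uses $2\Real[\cdot]$ this does not affect the argument.
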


One can then mimic the calculations in the proof of Theorem \ref{better3b} to deduce that if $m,n,s,t$ are fixed as in Proposition \ref{better4a}, then for sufficiently small values of $|a|$, the operator $T_{\varphi}$ is hyponormal while it is not hyponormal when $|a|$ is sufficiently large.  In the special case when $m=n$ and $s=t$, then $\delta_k'\equiv0$ and Proposition \ref{better4a} simplifies to the following statement.

\begin{corollary}\label{better4b}
If $n\in\bbN$ and $s\in[0,\infty)$, then $T_{|z|^s(z^n+a\bar{z}^n)}$ is hyponormal if and only if $|a|\leq1$.
 \end{corollary}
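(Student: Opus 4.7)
The plan is to specialize the setup of Proposition \ref{better4a} to the case $m=n$, $s=t$. Since the hypothesis $\delta_k'>0$ fails (in fact it will turn into equality), I cannot quote that proposition directly; instead I would go back one step to the underlying quadratic inequality in $|a|$, which is the analogue for $T_{z^n|z|^s+a\bar z^m|z|^t}$ of \eqref{aeq}. That inequality reads
\[
|a|^{2}\sum_{k=0}^{\infty}\omega_k'|u_k|^{2}+2\sum_{k=0}^{\infty}\delta_k'\Real\bigl[\bar a\,u_k\bar u_{k+n+m}\bigr]+\sum_{k=0}^{\infty}\sigma_k'|u_k|^{2}\geq 0
\]
for every admissible sequence $\{u_k\}$, and $T_{\varphi}$ is hyponormal iff this holds for all such $\{u_k\}$.

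Next, I would substitute $m=n$ and $s=t$ into the defining formulas for $\sigma_k',\omega_k',\delta_k'$ and make the two key algebraic observations that carry the proof:
\begin{enumerate}
\item $\omega_k'=-\sigma_k'$ for every $k\geq 0$. For $k<n$ both quantities are $\pm\frac{4(k+n+1)}{(2(n+k)+s+2)^{2}}$; for $k\geq n$ the two two-term expressions are negatives of each other term by term.
\item $\delta_k'\equiv 0$. With $m=n$, $s=t$ the two fractions in the definition of $\delta_k'$ have the same numerator $4(k+n+1)$ and the same denominator $(2(k+n)+s+2)(2(k+2n)+s+2)$, so they cancel.
\end{enumerate}
Both identities are routine to check.

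Plugging these identities into the displayed inequality collapses it to
\[
(1-|a|^{2})\sum_{k=0}^{\infty}\sigma_k'|u_k|^{2}\geq 0.
\]
Since $\sigma_k'>0$ for every $k$ (noted already in the paper), this inequality holds for all admissible $\{u_k\}$ if and only if $1-|a|^{2}\geq 0$, i.e.\ $|a|\leq 1$. That proves both directions at once.

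There is no real obstacle here; the only thing to be careful about is not invoking Proposition \ref{better4a} as a black box, because its hypothesis $\delta_k'>0$ is violated in exactly this degenerate case. The content of the corollary is really that the $m=n$, $s=t$ specialization washes out the cross term and turns the quadratic inequality in $|a|$ into a one-parameter family of identical scalar inequalities, all controlled by the single threshold $|a|=1$.
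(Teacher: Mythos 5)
Your proof is correct and follows essentially the same route as the paper: specialize the coefficient sequences to $m=n$, $s=t$, observe that $\delta_k'\equiv 0$ and $\omega_k'=-\sigma_k'$, and reduce the hyponormality condition to $(1-|a|^2)\sum_k\sigma_k'|u_k|^2\geq 0$. Your extra care in returning to the underlying quadratic inequality rather than citing Proposition \ref{better4a} verbatim is well taken, since that proposition's hypothesis $\delta_k'>0$ degenerates to equality here — a point the paper passes over silently.
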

 
 Thus we see that $T_{|z|^s(z^n+a\bar{z}^n)}$ is hyponormal precisely when the analytic part dominates the anti-analytic part of the symbol.  The special case of Corollary \ref{better4b} when $s=0$ is contained in \cite[Proposition 1.4.4]{Sadraoui}.

\section{Multiplicative Perturbations}\label{multiplicative}

In this section, we will list some generalizations of results from \cite{FL} that are related to question (Q-V).  Many of these results share similar (or even identical) proofs to the corresponding results from \cite{FL}, but we state them here with added generality for the sake of completeness.

\subsection{The Operator $T_{z^n|z|^s}$}\label{th2}

Let us begin by proving the following generalization of \cite[Theorem 2]{FL}, which considers the symbol $T_{\varphi}$ with $\varphi$ as in Section \ref{additive} but with $n=m$ and $s=t$.

\begin{theorem}\label{better2}
If $n\in\bbN$ and $s\in(0,\infty)$, then $T_{z^n|z|^s}$ is hyponormal.
\end{theorem}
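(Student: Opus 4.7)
The plan is to reduce the claim to showing that the coefficients $\sigma_k$ already introduced in Section \ref{additive} are strictly positive. Since $T_\varphi$ has symbol $\varphi(z)=z^n|z|^s$ and $T_\varphi^{*}=T_{\bar z^n|z|^s}$, I would apply Lemma \ref{zm} to obtain
\[
T_\varphi(z^k)=\frac{2(k+n+1)}{2(k+n)+s+2}\,z^{k+n}
\]
and Lemma \ref{zm2} to obtain
\[
T_\varphi^{*}(z^k)=\begin{cases}0 & k<n,\\[2pt]\dfrac{2(k-n+1)}{2k+s+2}\,z^{k-n} & k\geq n.\end{cases}
\]
Writing $u=\sum_k u_k z^k$ and using $\|z^j\|^2=1/(j+1)$, a direct calculation then gives
\[
\|T_\varphi u\|^2-\|T_\varphi^{*}u\|^2=\sum_{k=0}^{\infty}\sigma_k|u_k|^2,
\]
where $\sigma_k$ is exactly the sequence defined at the beginning of Section \ref{additive}.

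Having identified the diagonal quadratic form, the entire problem reduces to showing $\sigma_k>0$ for every $k\geq 0$. For $0\le k<n$ positivity is immediate since only the first term is present. For $k\ge n$ I would clear denominators and show
\[
(k+n+1)(2k+s+2)^2>(k-n+1)\bigl(2(k+n)+s+2\bigr)^2.
\]
The plan is to substitute $A=2k+s+2$, observe $2(k+n)+s+2=A+2n$, expand, and after cancellation reduce the inequality to
\[
(2k+s+2)(s+2n)>2n(k-n+1),
\]
which simplifies to a sum of manifestly non-negative terms with at least one strictly positive contribution (for instance the $s^2$ term, since $s>0$). This is the step that the paper already promises is "elementary to verify" for the $\sigma_k$, so I would just carry out that verification here.

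The main obstacle is nothing conceptually deep: it is simply keeping track of the bookkeeping so that the formula for $\|T_\varphi u\|^2-\|T_\varphi^{*}u\|^2$ matches the $\sigma_k$ defined previously and then doing the elementary but slightly tedious algebra to check $\sigma_k>0$. There is no cross-term to manage (as occurs in the perturbation results of Section \ref{additive}) because the symbol has only one "summand," so hyponormality follows immediately once positivity of $\sigma_k$ is established.
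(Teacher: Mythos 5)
Your proof is correct and follows essentially the same route as the paper: both reduce hyponormality of $T_{z^n|z|^s}$ to the fact that $\langle[T_\varphi^*,T_\varphi]u,u\rangle$ is a diagonal quadratic form $\sum_k\sigma_k|u_k|^2$ with the $\sigma_k$ of Section \ref{additive}, the paper doing so by specializing Proposition \ref{better3a} to $m=n$, $s=t$ (where the form factors as $|1+a|^2\sum_k\omega_k|u_k|^2$) and you by computing the $a=0$ case directly from Lemmas \ref{zm} and \ref{zm2}. The only difference is that you actually carry out the positivity check $(2k+s+2)(s+2n)>2n(k-n+1)$ that the paper dismisses as an elementary verification, and that check is correct.
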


\begin{proof}
It suffices to prove Proposition \ref{better3a} in the case $m=n$ and $s=t$.  In this case, $\sigma_k=\omega_k=\delta_k$ and so hyponormality is equivalent to
\[
|1+a|^2\sum_{k=0}^{\infty}\omega_k|u_k|^2\geq0,
\]
which is true.
%If $u=\sum_{k=0}^{\infty}u_kz^k\in A^2(\bbD)$, then one finds
%\begin{align*}
%&\langle[T_{z^n|z|^s}^*,T_{z^n|z|^s}]u,u\rangle\\
%&\qquad=\sum_{k=0}^{n-1}\frac{4(k+n+1)|u_k|^2}{(2k+s+2+2n)^2}+\sum_{k=n}^{\infty}|u_k|^2\frac{8n(2n^2+2n(k+s+1)+s(2k+s+2))}{(2k+s+2)^2(2k+s+2+2n)^2}
%\end{align*}
%which is clearly non-negative.
\end{proof}

This result should be interpreted in contrast to Theorem \ref{better1}.  That result says that the self-adjoint (and hence hyponormal) operator $T_{|z|^s}$ is transformed into an operator that is not hyponormal by the addition of $\epsilon z$ to the symbol for sufficiently small $\epsilon>0$.  Theorem \ref{better2} states that a multiplicative perturbation by the same function does not destroy hyponormality.

%Theorem \ref{better2} actually shows that $\{z^k\}_{k=0}^{\infty}$ is a complete orthogonal set of eigenfunctions for the commutator $[T_{z^n|z|^s}^*,T_{z^n|z|^s}]$,  Thus, the spectrum of this commutator is given by $\{0\}\cup\{\beta_k\}_{k=0}^{\infty}$, where
%\[
%\beta_k=\begin{cases}
%\frac{4(k+n+1)}{(2k+s+2+2n)^2}\qquad\qquad & 0\leq k\leq n-1\\
%\,\\
%\frac{8n(2n^2+2n(k+s+1)+s(2k+s+2))}{(2k+s+2)^2(2k+s+2+2n)^2} & k\geq n
%\end{cases}
%\]

\subsection{Algebraic Functions of Fixed Relative Degree}\label{fixed}

Here we present an improvement of \cite[Theorem 14]{FL}.

\begin{theorem}\label{rel1}
Suppose $\varphi(z)=z^m\left(|z|^{s_0}+a_1|z|^{s_1}+\cdots+a_n|z|^{s_n}\right)$ where $m,n\in\bbN$ and $s_i\in(0,\infty)$ for all $i=0,\ldots,n$.  Then $T_{\varphi}$ is hyponormal if and only if
\begin{equation}\label{alph}
\left|\sum_{j=0}^n\frac{a_j}{2\alpha+2m+2+s_j}\right|^2\geq\frac{\alpha-m+1}{\alpha+m+1}\left|\sum_{j=0}^n\frac{a_j}{2\alpha+2+s_j}\right|^2
\end{equation}
for all natural numbers $\alpha\geq m$, where $a_0=1$.
\end{theorem}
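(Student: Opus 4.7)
\smallskip

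\noindent\emph{Proof plan.} The plan is to compute the action of $T_{\varphi}$ and $T_{\varphi}^{*}=T_{\bar\varphi}$ directly on the monomial basis $\{z^k\}_{k\ge 0}$ of $A^2(\bbD)$ using Lemmas \ref{zm} and \ref{zm2}, and then observe that both $T_{\varphi}^{*}T_{\varphi}$ and $T_{\varphi}T_{\varphi}^{*}$ act diagonally in this basis. Hyponormality will then decouple completely across $k$, giving a single scalar inequality for each $k\ge m$ that is exactly \eqref{alph} after renaming $k\mapsto\alpha$.

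Since $\varphi(z)=\sum_{j=0}^{n}a_{j}z^{m}|z|^{s_{j}}$ with $a_{0}=1$, applying Lemma \ref{zm} term by term yields $T_{\varphi}(z^{k})=c_{k}z^{k+m}$ where $c_{k}=2(k+m+1)\sum_{j=0}^{n}a_{j}/(2(k+m)+s_{j}+2)$. Similarly, $\bar\varphi=\bar z^{m}\sum_{j=0}^{n}\bar a_{j}|z|^{s_{j}}$ and Lemma \ref{zm2} gives $T_{\varphi}^{*}(z^{k})=d_{k}z^{k-m}$ for $k\ge m$ with $d_{k}=2(k-m+1)\sum_{j=0}^{n}\bar a_{j}/(2k+s_{j}+2)$, and $T_{\varphi}^{*}(z^{k})=0$ for $k<m$. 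Because $T_{\varphi}$ shifts the degree of every monomial by exactly $+m$ while $T_{\varphi}^{*}$ shifts it by $-m$ (or annihilates it), the compositions $T_{\varphi}^{*}T_{\varphi}$ and $T_{\varphi}T_{\varphi}^{*}$ preserve the degree of each monomial and are therefore diagonal in $\{z^{k}\}$: namely $T_{\varphi}^{*}T_{\varphi}(z^{k})=c_{k}d_{k+m}z^{k}$ and $T_{\varphi}T_{\varphi}^{*}(z^{k})=d_{k}c_{k-m}z^{k}$, the latter interpreted as $0$ for $k<m$.

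Each diagonal entry is a non-negative real number, being $\|T_{\varphi}z^{k}\|^{2}/\|z^{k}\|^{2}$ or $\|T_{\varphi}^{*}z^{k}\|^{2}/\|z^{k}\|^{2}$. Since $\{z^{k}\}$ is an orthogonal basis, the operator inequality $[T_{\varphi}^{*},T_{\varphi}]\ge 0$ is therefore equivalent to the scalar inequality $c_{k}d_{k+m}\ge d_{k}c_{k-m}$ for every $k\ge m$ (the case $0\le k<m$ being automatic, since $T_{\varphi}T_{\varphi}^{*}(z^{k})=0$). Direct substitution gives $c_{k}d_{k+m}=4(k+m+1)(k+1)\bigl|\sum_{j}a_{j}/(2(k+m)+s_{j}+2)\bigr|^{2}$ and $d_{k}c_{k-m}=4(k-m+1)(k+1)\bigl|\sum_{j}a_{j}/(2k+s_{j}+2)\bigr|^{2}$. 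Dividing by $4(k+m+1)(k+1)$ and relabeling $\alpha=k$ produces exactly \eqref{alph}.

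The only conceptually non-routine step is recognizing that the factorization $\varphi(z)=z^{m}h(|z|)$ forces $T_{\varphi}$ and $T_{\varphi}^{*}$ to act as pure monomial shifts, which is what allows the hyponormality criterion to decouple into a single inequality per basis vector and gives the clean if-and-only-if statement. Everything else is bookkeeping with the constants produced by Lemmas \ref{zm} and \ref{zm2}.
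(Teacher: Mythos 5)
Your proof is correct, and it is genuinely different in character from the paper's: the paper disposes of Theorem \ref{rel1} in one line by citing Lu and Liu's Theorem 3.1 on quasihomogeneous symbols on weighted Bergman spaces, whereas you give a self-contained derivation from Lemmas \ref{zm} and \ref{zm2}. Your key observation --- that a symbol of the form $z^m h(|z|)$ makes $T_\varphi$ a weighted forward shift of the monomial basis by $m$ and $T_\varphi^*$ the corresponding backward shift, so that $T_\varphi^*T_\varphi$ and $T_\varphi T_\varphi^*$ are simultaneously diagonal and hyponormality decouples into one scalar inequality per basis vector --- is exactly the mechanism underlying the cited result, and your bookkeeping checks out: $c_k d_{k+m}=4(k+m+1)(k+1)\bigl|\sum_j a_j/(2(k+m)+s_j+2)\bigr|^2$ and $d_k c_{k-m}=4(k-m+1)(k+1)\bigl|\sum_j a_j/(2k+s_j+2)\bigr|^2$ reproduce \eqref{alph} after dividing by $4(k+m+1)(k+1)$, and the range $0\le k<m$ is indeed vacuous since $T_\varphi^*$ annihilates those monomials. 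What your approach buys is transparency and independence from the external reference (and it makes visible why the criterion is an exact equivalence rather than a one-sided bound, in contrast to the non-diagonal situations of Sections \ref{Ex1} and \ref{additive}); what the citation buys is brevity and immediate access to the more general weighted-Bergman-space statement. The only point worth stating explicitly in a polished write-up is the elementary fact that a bounded operator which is diagonal with real entries in an orthogonal basis is positive semidefinite if and only if every diagonal entry is nonnegative, which is what licenses the decoupling.
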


\begin{proof}
This is an immediate consequence of \cite[Theorem 3.1]{LuLiu}.
\end{proof}

The nice thing about Theorem \ref{rel1} is that after expanding the squares and clearing the denominators, one obtains a positivity condition on a polynomial in $\alpha$ on the set $[m,\infty)\cap\bbN$.  From a practical standpoint, this is a straightforward condition to verify numerically.

If $n=1$, then we can make much more explicit conclusions that are related to question (Q-V).  In this case, the condition \eqref{alph} is equivalent to a polynomial $\mcp(\alpha)$ being positive on $[m,\infty)\cap\bbN$, where the degree of $\mcp$ is at most $5$.  Therefore, one can express the critical points of $\mcp$ as algebraic functions of the coefficients and thus find elementary conditions on $m$, $s_0$, $s_1$, and $a_1$ that guarantee the positivity of $\mcp$ on $[m,\infty)\cap\bbN$.  An example of the kind of conclusions one can reach in this fashion is given by the following corollary.

\begin{corollary}\label{better10}
Suppose $\varphi(z)=z^m\left(|z|^{s_0}+a_1|z|^{s_1}\right)$ where $m\in\bbN$ and $s_0,s_1\in(0,\infty)$ satisfy $s_0\neq s_1$.
\begin{itemize}
\item[(i)]  If $a_1=-1$, then $T_{\varphi}$ is not hyponormal.
\item[(ii)]  If $a_1\neq-1$ and
\[
m+s_1<(s_1-s_0)\Real\left[\frac{1}{1+a_1}\right]
\]
then $T_{\varphi}$ is not hyponormal.
\item[(iii)]  If $\Real[a_1]>0$, then $T_{\varphi}$ is hyponormal.
\end{itemize}
\end{corollary}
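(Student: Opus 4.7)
My plan is to apply Theorem \ref{rel1} with $n=1$ to recast hyponormality of $T_\varphi$ as a family of scalar inequalities
\[
\left|\frac{1}{2\alpha + 2m + 2 + s_0} + \frac{a_1}{2\alpha + 2m + 2 + s_1}\right|^2 \geq \frac{\alpha - m + 1}{\alpha + m + 1}\left|\frac{1}{2\alpha + 2 + s_0} + \frac{a_1}{2\alpha + 2 + s_1}\right|^2
\]
indexed by natural numbers $\alpha \geq m$. Writing $r := 2\alpha + 2$ and $F(r) := \bigl|\tfrac{1}{r + s_0} + \tfrac{a_1}{r + s_1}\bigr|^2$, this condition reads $(r + 2m) F(r + 2m) \geq (r - 2m) F(r)$, and I will analyze it in each regime separately.

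For parts (i) and (ii), the approach is asymptotic expansion as $\alpha \to \infty$. In part (i), substituting $a_1 = -1$ and combining fractions factors $(s_1 - s_0)^2$ out of both sides; since $s_0 \neq s_1$ one may cancel it, reducing the condition to $\bigl[(r + s_0)(r + s_1)/((r + 2m + s_0)(r + 2m + s_1))\bigr]^2 \geq (r - 2m)/(r + 2m)$. A Taylor expansion in $1/r$ gives the left side as $1 - 8m/r + O(r^{-2})$ and the right as $1 - 4m/r + O(r^{-2})$, so the inequality fails for every sufficiently large integer $\alpha$. In part (ii), under $a_1 \neq -1$ the series reads $F(r) = |1+a_1|^2/r^2 - 2\Real[(1+a_1)\overline{(s_0 + a_1 s_1)}]/r^3 + O(r^{-4})$; a careful bookkeeping shows that the contributions to $(r + 2m)F(r + 2m) - (r - 2m) F(r)$ at orders $r^{-1}$ and $r^{-2}$ cancel identically, with the first nonzero term being
\[
\frac{4m\,|1+a_1|^2\bigl(m + \Real[(s_0 + a_1 s_1)/(1 + a_1)]\bigr)}{r^3}.
\]
Thus the necessary condition $\Real[(s_0 + a_1 s_1)/(1 + a_1)] \geq -m$ is forced; using the identity $(s_0 + a_1 s_1)/(1 + a_1) = s_1 - (s_1 - s_0)/(1 + a_1)$ this is exactly $(s_1 - s_0)\Real[1/(1 + a_1)] \leq m + s_1$, which is the negation of the hypothesis in (ii).

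For part (iii), I will exploit the positivity of $\Real(a_1)$ to split the inequality into three term-wise inequalities. For $k \in \{0, 2m\}$ I would expand
\[
\left|\frac{1}{r + k + s_0} + \frac{a_1}{r + k + s_1}\right|^2 = \frac{1}{(r + k + s_0)^2} + \frac{2\Real(a_1)}{(r + k + s_0)(r + k + s_1)} + \frac{|a_1|^2}{(r + k + s_1)^2}.
\]
When $\Real(a_1) > 0$ all three coefficients are non-negative, so it suffices to verify separately
\[
\frac{\alpha + m + 1}{(r + 2m + s_j)^2} \geq \frac{\alpha - m + 1}{(r + s_j)^2} \quad (j = 0, 1), \qquad \frac{\alpha + m + 1}{(r + 2m + s_0)(r + 2m + s_1)} \geq \frac{\alpha - m + 1}{(r + s_0)(r + s_1)}.
\]
After clearing denominators each reduces to a polynomial inequality in $\alpha$ (treating $m$, $s_0$, $s_1$ as positive parameters) whose difference simplifies to a sum of monomials with manifestly non-negative coefficients; summing the three term-wise inequalities recovers the full estimate for all $\alpha \geq m$.

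The main obstacle will be the asymptotic analysis in (ii): the expansions of $(r + 2m) F(r + 2m)$ and $(r - 2m) F(r)$ agree through order $r^{-2}$, so the decisive condition only emerges at order $r^{-3}$ of the difference. This forces one to keep three terms of the series for $F$ and to track the cancellations carefully. Once the correct linear condition in $\Real(a_1)$ is isolated, passing between its two equivalent formulations is just the algebraic identity $(s_0 + a_1 s_1)/(1+a_1) = s_1 - (s_1 - s_0)/(1 + a_1)$.
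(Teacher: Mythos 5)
Your proposal is correct and follows essentially the same route as the paper: both apply Theorem \ref{rel1} with $n=1$ and then analyze the resulting family of inequalities, with your asymptotic expansions in (i)--(ii) amounting to the paper's computation of the sign of the leading coefficient of the cleared-denominator polynomial $\mcp(\alpha)$ (your necessary condition $\Real[(s_0+a_1s_1)/(1+a_1)]\geq -m$ is algebraically identical to the paper's condition on the degree-$5$ coefficient). Your term-wise decomposition in (iii), exploiting $\Real(a_1)>0$ to split the squared modulus into three non-negative pieces, is a slightly more structured way of carrying out the paper's check that $\mcp$ has only positive coefficients, but it is the same elementary polynomial-positivity argument.
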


\begin{proof}
As we have stated, the conclusion of Theorem \ref{rel1} states that $T_{\varphi}$ is hyponormal if and only if some polynomial $\mcp(\alpha)$ is non-negative on $[m,\infty)\cap\bbN$.  If $a_1=-1$, then $\mcp$ has degree $4$ and
\[
\mcp(\alpha)=\alpha^4\left[-\frac{m}{2}(s_0-s_1)^2\right]+\cdots
\]
Since this leading coefficient is negative, we conclude that the operator $T_{\varphi}$ is not hyponormal.  If $a_1\neq-1$, then $\mcp$ has degree $5$ and satisfies
\[
\mcp(\alpha)=\alpha^5\left[m((m+s_1)|1+a_1|^2+(s_0-s_1)(\Real[a_1]+1)\right]+\cdots
\]
so non-negativity of this leading coefficient becomes a necessary condition for hyponormality.  Negativity of this leading coefficient is precisely the statement in (ii) above.

Finally, if $\Real[a_1]>0$, then one can verify by hand that the polynomial $\mcp$ has only positive coefficients.
%, except possibly for the linear term.  Thinking of this linear coefficient as a second degree polynomial in $r$, an application of Descarte's Rule of Signs shows that this coefficient is also positive when $r>0$, so that in fact all coefficients of $\mcp$ are positive real numbers.
It follows that $\mcp$ is positive on $[0,\infty)$ and hence $T_{\varphi}$ is hyponormal.
\end{proof}

Notice that part (i) of Corollary \ref{better10} generalizes \cite[Example 11]{FL}.

%\newpage

%\vspace{7mm}


\begin{thebibliography}{99}






\bibitem{AC} P. Ahern and Z. Cuckovic, {\em A mean value inequality with applications to Bergman space operators}, Pacific J. Math. 173 (1996), no. 2, 295--305.

\bibitem{BKLSS} C. Beneteau, D. Khavinson, C. Liaw, D. Seco, and B. Simanek, {\em Zeros of optimal polynomial approximants: Jacobi matrices and Jentzsch-type theorems}, to appear in Rev. Mat. Iberoam.

\bibitem{Cowen} C. Cowen, {\em Hyponormal and subnormal Toeplitz operators}, Surveys of some recent results in operator theory, Vol. I, 155--167, Pitman Res. Notes Math. Ser., 171, Longman Sci. Tech., Harlow, 1988.

\bibitem{CC} Z. Cuckovic and R. Curto, {\em A new necessary condition for the hyponormality of Toeplitz operators on the Bergman space}, to appear in J. Oper. Theory.

\bibitem{Duren} P. Duren and A. Schuster, {\em Bergman Spaces}, Mathematical Surveys and Monographs v. 100, 2004.

\bibitem{FL} M. Fleeman and C. Liaw, {\em Hyponormal Toeplitz operators with non-harmonic symbol acting on the Bergman space}, accepted at Oper. Mat.

\bibitem{Hwang} I. S. Hwang, {\em Hyponormal Toeplitz operators on the Bergman space}, J. Korean Math. Soc. 42 (2005), no. 2,  387--403.

\bibitem{Hwang2} I. S. Hwang, {\em Hyponormality of Toeplitz operators on the Bergman space}, J. Korean Math. Soc. 45 (2008), no. 4, 1027--1041.

\bibitem{LuLiu} Y. Lu and C. Liu, {\em Commutativity and hyponormality of Toeplitz operators on the weighted Bergman space}, J. Korean Math. Soc. 46 (2009), no. 3, 621--642.

\bibitem{LuShi} Y. Lu and Y. Shi, {\em Hyponormal Toeplitz operators on the weighted Bergman space}, Integral Equations Operator Theory 65 (2009), no. 1, 115--129.

\bibitem{Putnam} C. R. Putnam, {\em An inequality for the area of hyponormal spectra}, Math. Z. 116 1970 323--330.

\bibitem{Sadraoui} H. Sadraoui, {\em Hyponormality of Toeplitz operators and Composition operators}, Ph.D. Thesis, Purdue University, 1992.









\end{thebibliography}
\end{document}